\newtheorem{theorem}{Theorem}[section]
\newtheorem{proposition}[theorem]{Proposition}
\newtheorem{corollary}[theorem]{Corollary}
\newtheorem{definition}[theorem]{Definition}
\newtheorem{assumption}[theorem]{Assumption}
\begin{document}
\setlength\arraycolsep{2pt}
\title{On a Fractional Stochastic Hodgkin-Huxley Model}
\author{Laure COUTIN*}
\author{Jean-Marc GUGLIELMI**}
\author{Nicolas MARIE***}
\address{*Institut de math\'ematiques de Toulouse, Toulouse, France}
\email{laure.coutin@math.univ-toulouse.fr}
\address{**American Hospital of Paris, Neuilly-sur-Seine, France}
\email{jean-marc.guglielmi@ahparis.org}
\address{***Laboratoire Modal'X, Universit\'e Paris 10, Nanterre, France}
\email{nmarie@parisnanterre.fr}
\address{***ESME Sudria, Paris, France}
\email{nicolas.marie@esme.fr}
\keywords{Hodgkin-Huxley model ; Stochastic differential equations ; Fractional Brownian motion ; Viability theorem.}
\date{}
\maketitle
%


%
\begin{abstract}
The model studied in this paper is a stochastic extension of the so-called neuron model introduced by Hodgkin and Huxley. In the sense of rough paths, the model is perturbed by a multiplicative noise driven by a fractional Brownian motion, with a vector field satisfying the viability condition of Coutin and Marie for $\mathbb R\times [0,1]^3$. An application to the modeling of the membrane potential of nerve fibers damaged by a neuropathy is provided.
\end{abstract}
\tableofcontents
\noindent
\textbf{MSC2010:} 60H10, 92B99.
\\
\\
\textbf{Acknowledgements.} Many thanks to Paul Raynaud de Fitte for its advices to improve the final version of this paper.
%


%
\section{Introduction}
The model studied in this paper is a stochastic extension of the so-called neuron model introduced by Hodgkin and Huxley in \cite{HH52}. The original model is a $4$-dimensional ordinary differential equation which models the dynamics of the ionic currents together with the membrane potential of the neuron. Precisely, the membrane potential of the neuron is modeled by
\begin{equation}\label{voltage_equation_intro}
C\dot V + I_{\textrm{Na}} + I_{\textrm K} + I_{\textrm L} = I,
\end{equation}
where $I_k := G_k(V - E_k)$ is the intensity of the ionic current $k$ (Na, K or L), $G_{\textrm L} :=\bar g_{\textrm L}$, $G_{\textrm K} :=\bar g_{\textrm K}n^4$ with
\begin{equation}\label{equation_n_intro}
\dot n =
\alpha_n(V)(1 - n) -\beta_n(V)n
\end{equation}
and $G_{\textrm{Na}} :=\bar g_{\textrm{Na}} m^3h$ with
\begin{equation}\label{equation_m_h_intro}
\left\{
\begin{array}{rcl}
 \dot m & = & \alpha_m(V)(1 - m) -\beta_m(V)m\\
 \dot h & = & \alpha_h(V)(1 - h) -\beta_h(V)h.
\end{array}
\right.
\end{equation}
All the parameters involving in the previous equations are defined at Section 2.
\\
\\
There are many deterministic extensions of Hodgkin-Huxley's model. For instance, in \cite{MR81}, Miller and Rinzel extended the Hodgkin-Huxley model in order to take into account that the propagation speed of an impulse is influenced by previous activity. In Lee et al. \cite{LNK98}, the authors studied a Hodgkin-Huxley model with no external signal. In \cite{NS14}, Nagy and Sweilam studied a deterministic fractional Hodgkin-Huxley model in which the derivatives are replaced by fractional derivatives.
\\
\\
In \cite{MS02}, Meunier and Segev proved that the behavior of $n$, $m$ and $h$ is partially random. In Saarinen et al. \cite{SLY08}, (\ref{equation_n_intro})-(\ref{equation_m_h_intro}) is perturbed by an additive Brownian noise. Unfortunately, in this case, the processes $n$, $m$ and $h$ are not $[0,1]$-valued as expected. In Cresson et al. \cite{CPS13}, in the sense of It\^o, (\ref{equation_n_intro})-(\ref{equation_m_h_intro}) is perturbed by a multiplicative noise driven by a Brownian motion with a vector field satisfying the viability condition of Aubin and DaPrato \cite{AD90} for $K :=\mathbb R\times [0,1]^3$.
\\
\\
In this paper, in the sense of rough paths, (\ref{equation_n_intro})-(\ref{equation_m_h_intro}) is perturbed by a multiplicative noise driven by a fractional Brownian motion with a vector field satisfying the viability condition of Coutin and Marie \cite{CM16} for $K$. A motivation for this extension of the Hodgkin-Huxley model is to control the regularity of the paths of $(V,n,m,h)$ via the Hurst parameter of the driving signal without losing the viability of $(V,n,m,h)$ in $K$. As suggested in Subsection 3.2, it should be interesting in applications because in some types of neuropathies there is a decrease over time of the regularity of the shape of the membrane potential of damaged nerve fibers (see Tasaki \cite{TASAKI56}).
\\
\\
In mathematical finance, the semimartingale property of the prices process is crucial in order to ensure the existence and the uniqueness of the risk-neutral probability measure. The It\^o stochastic calculus is then tailor-made to model prices in finance. This kind of condition isn't required in biological models. So, the pathwise stochastic calculus can be used to model dynamical systems in biology and the fractional stochastic extension of the Hodgkin-Huxley model studied in this paper is an example. For an application of the pathwise stochastic calculus in pharmacokinetics, see Marie \cite{MARIE15_PKPD}. As explained in Subsection 3.2, a motivation of the pathwise approach is to control the regularity of the paths of the model via the Hurst parameter of the driving signal.
\\
\\
Section 2 is a survey on the deterministic Hodgkin-Huxley neuron model and provides an appropriate formulation for the stochastic generalization introduced in Section 3. Section 3 deals with the existence, uniqueness and viability of the solution to the fractional stochastic Hodgkin-Huxley neuron model, but also with numerical simulations and an application to the modeling of the membrane potential of nerve fibers damaged by a neuropathy. Section 4 presents some perspectives and possible applications of the model. Finally, after a brief survey on the fractional Brownian motion and the pathwise stochastic calculus, the viability theorem used in this papier is proved in Appendix A.
%


%
\section{The deterministic Hodgkin-Huxley model}
This section is a survey on the so called Hodgkin-Huxley neuron model (see Hodgkin and Huxley \cite{HH52}) and provides an appropriate formulation for the stochastic generalization introduced in Section 3.
%


%
\subsection{The membrane potential}
Let $V(t)$ be the displacement at time $t\in [0,T]$ of the membrane potential from its resting value. The signal $V$ satisfies
\begin{equation}\label{voltage_equation}
C\dot V(t) + I_{\textrm{ion}}(t) = I,
\end{equation}
where $C > 0$ is the membrane capacity per unit area, $I_{\textrm{ion}}(t)$ is the ionic current flowing across the membrane, in other words the ionic current density, and $I$ is the total membrane current density.
%


%
\subsection{The ionic currents}
In the Hodgkin-Huxley model, there are three ionic currents: Na (sodium ions), K (potassium ions) and L (other ions). It gives the following decomposition of $I_{\textrm{ion}}(t)$:
\begin{equation}\label{I_ion_decomposition}
I_{\textrm{ion}}(t) =
I_{\textrm{Na}}(t) +
I_{\textrm{K}}(t) +
I_{\textrm{L}}(t)
\end{equation}
with
\begin{displaymath}
I_k(t) := G_k(t)(V(t) - E_k),
\end{displaymath}
where $k$ is the current (Na, K or L) and $G_k(t)$ and $E_k$ are the conductance and the equilibrium potential for the $k$ ions respectively.
\\
\\
The potassium ions can only cross the membrane when four similar particles occupy a certain region of the membrane. It gives the following decomposition of $G_{\textrm K}(t)$:
\begin{displaymath}
G_{\textrm K}(t) =
\bar g_{\textrm K}n^4(t),
\end{displaymath}
where $\bar g_{\textrm K}$ is a normalization constant and $n(t)$ is the proportion of particles on the inside of the membrane. The signal $n$ satisfies
\begin{equation}\label{equation_n}
\dot n(t) =
\alpha_n(V(t))(1 - n(t)) -\beta_n(V(t))n(t)
\end{equation}
with
\begin{displaymath}
\alpha_n(v) :=
\frac{0.01\cdot (10 - v)}{\displaystyle{\exp\left(\frac{10 - v}{10}\right) - 1}}
\textrm{ and }
\beta_n(v) :=
0.125\cdot\exp\left(-\frac{v}{80}\right)
\end{displaymath}
for every $v\in\mathbb R$.
\\
\\
The sodium conductance is proportional to the number of sites on the inside of the membrane which are occupied simultaneously by three activating molecules but are not blocked by an inactivating molecule. It gives the following decomposition of $G_{\textrm{Na}}(t)$:
\begin{displaymath}
G_{\textrm{Na}}(t) =
\bar g_{\textrm{Na}}m^3(t)h(t),
\end{displaymath}
where $\bar g_{\textrm{Na}}$ is a normalization constant, $m(t)$ is the proportion of activating molecules on the inside of the membrane and $h(t)$ is the proportion of inactivating molecules on the outside of the membrane. The signal $m$ satisfies
\begin{equation}\label{equation_m}
\dot m(t) =
\alpha_m(V(t))(1 - m(t)) -\beta_m(V(t))m(t)
\end{equation}
with
\begin{displaymath}
\alpha_m(v) :=
\frac{0.1\cdot (25 - v)}{\displaystyle{\exp\left(\frac{25 - v}{10}\right) - 1}}
\textrm{ and }
\beta_m(v) :=
4\cdot\exp\left(-\frac{v}{18}\right)
\end{displaymath}
for every $v\in\mathbb R$. The signal $h$ satisfies
\begin{equation}\label{equation_h}
\dot h(t) =
\alpha_h(V(t))(1 - h(t)) -\beta_h(V(t))h(t)
\end{equation}
with
\begin{displaymath}
\alpha_h(v) :=
0.07\cdot\exp\left(-\frac{v}{20}\right)
\textrm{ and }
\beta_h(v) :=
\frac{1}{\displaystyle{\exp\left(\frac{30 - v}{10}\right) + 1}}
\end{displaymath}
for every $v\in\mathbb R$.
\\
\\
Note that the numerical values involved in $\alpha_n$, $\alpha_m$ and $\alpha_h$ come from Hodgkin and Huxley \cite{HH52}, Part II.
%


%
\subsection{Existence, uniqueness and viability of the solution}
It has been already proved, for instance in Aubin et al. \cite{ABS11}, Section 12.3.1, in the extended framework of the runs and impulse systems. Let's prove it via Corollary \ref{viability_corollary} for the sake of completeness.
\\
\\
By putting equations (\ref{equation_n}), (\ref{equation_m}) and (\ref{equation_h}) together, $P := (m,h,n)$ satisfies
\begin{equation}\label{gates_equation}
\dot P(t) = b_P(P(t),V(t)),
\end{equation}
where
\begin{displaymath}
b_P(p,v) :=
\begin{pmatrix}
 \alpha_m(v)(1 - p_1) -\beta_m(v)p_1\\
 \alpha_h(v)(1 - p_2) -\beta_h(v)p_2\\
 \alpha_n(v)(1 - p_3) -\beta_n(v)p_3
\end{pmatrix}
\end{displaymath}
for every $(p,v)\in [0,1]^3\times\mathbb R$.
\\
\\
By putting equations (\ref{voltage_equation}), (\ref{I_ion_decomposition}) and (\ref{gates_equation}) together, $X := (P,V)$ satisfies
\begin{equation}\label{HH_EDO}
\dot X(t) = b(X(t)),
\end{equation}
where for every $(p,v)\in [0,1]^3\times\mathbb R$,
\begin{displaymath}
b(p,v) :=
\begin{pmatrix}
 b_P(p,v)\\
 b_V(p,v)
\end{pmatrix}
\end{displaymath}
and
\begin{displaymath}
b_V(p,v) :=
\frac{1}{C}(I -
\bar g_{\textrm{Na}}\cdot
p_{1}^{3}\cdot p_2\cdot
(v - E_{\textrm{Na}}) -
\bar g_{\textrm K}\cdot
p_{3}^{4}\cdot
(v - E_{\textrm K}) -
\bar g_{\textrm L}\cdot
(v - E_{\textrm L})).
\end{displaymath}
The map $b$ fulfills assumptions \ref{viability_assumption} and \ref{existence_uniqueness_compact} with $\sigma\equiv 0$ and $K := [0,1]^3\times\mathbb R$. Therefore, by Corollary \ref{viability_corollary}, Equation (\ref{HH_EDO}) with $X_0\in K$ as initial condition has a unique solution $X$ defined on $[0,T]$ and viable in $K$. Note that it is crucial to ensure the viability of $P$ in $[0,1]^3$ since $m(t)$, $h(t)$ and $n(t)$ are proportions by definition.
%


%
\section{A fractional generalization of the Hodgkin-Huxley model}
In this section, Equation (\ref{gates_equation}) which models the proportions $m(t)$, $h(t)$ and $n(t)$ will be perturbed by a multiplicative noise driven by a fractional Brownian motion, without loosing the viability of $P = (m,h,n)$ in $[0,1]^3$. In Subsection 3.1, the existence, uniqueness and viability of the solution $X$ to the fractional Hodgkin-Huxley model is proved by using the results of Appendix A. Subsection 3.2 deals with the control of the regularity of the paths of $X$ via the Hurst parameter of the driving fractional Brownian motion and an application to the modeling of the membrane potential of nerve fibers damaged by a neuropathy. Subsection 3.3 deals with some numerical simulations of $X$.
%


%
\subsection{Existence, uniqueness and viability of the solution}
Let $B$ be a fractional Brownian motion of Hurst parameter $H\in ]1/4,1[$ and consider also $\textrm B := (\textrm B_1,\textrm B_2,\textrm B_3)$, where $\textrm B_1$, $\textrm B_2$ and $\textrm B_3$ are three independent copies of $B$. In the sense of rough paths, consider the following stochastic extension of Equation (\ref{HH_EDO}):
\begin{equation}\label{fractional_HH}
dX(t) = b(X(t))dt +\sigma(X(t))d\textrm B(t),
\end{equation}
where $\sigma$ is a map from $\mathbb R^3$ into $\mathcal M_{4,3}(\mathbb R)$ such that $(b,\sigma)$ satisfies assumptions \ref{viability_assumption} and \ref{existence_uniqueness_compact} with $K = [0,1]^3\times\mathbb R$. For instance, with $\sigma_1,\sigma_2,\sigma_3 > 0$, one can put
\begin{displaymath}
\sigma(p,v) :=
\begin{pmatrix}
\sigma_1p_1(1 - p_1) & 0 & 0\\
0 & \sigma_2p_2(1 - p_2) & 0\\
0 & 0 & \sigma_3p_3(1 - p_3)\\
0 & 0 & 0
\end{pmatrix}
\end{displaymath}
for every $(p,v)\in [0,1]^3\times\mathbb R$.
\\
\\
Since the maps $b$ and $\sigma$ fulfill assumptions \ref{viability_assumption} and \ref{existence_uniqueness_compact} with $K$, by Corollary \ref{viability_corollary}, Equation (\ref{fractional_HH}) with $X_0\in K$ as initial condition has a unique solution $X$ defined on $[0,T]$ and viable in $K$.
\\
\\
Note that these ideas could be applied to extend other models. For instance, the Fitzhugh-Nagumo model (see Fitzhugh \cite{FITZHUGH55}).
%


%
\subsection{Control of the solution's paths regularity and applications}
By Proposition \ref{regularity_fbm}, for every $\alpha\in ]0,H[$, the paths of $\textrm B$ are $\alpha$-H\"older continuous. Moreover, by Theorem \ref{existence_uniqueness_rough}, Proposition \ref{regularity_solution_AN} and Proposition \ref{regularity_solution_DMN}, the solution of a rough differential equation inherits the H\"older regularity of its driving signal. So, the H\"older regularity of the paths of $P = (m,h,n)$, and then the regularity of the shape of the paths of $V$, are controlled by the Hurst parameter $H$ of $\textrm B$. Roughly speaking, the more $H$ is close to $1$, the more $P$ and $V$ have regular paths. Therefore, to take the fractional Brownian motion as driving signal in Equation (\ref{fractional_HH}) adds a way to control the regularity of the process $P$: the parameter $\sigma$ controls its global regularity and the parameter $H$ controls its local regularity.
\\
\\
Neurologists observed that in some types of neuropathies, there is a decrease over time of the regularity of the shape of the membrane potential of a damaged individual nerve fiber recorded several times during the disease (see Tasaki \cite{TASAKI56}). Assume that it is related to a perturbation of the dynamics of the ionic currents and let us provide a model to study the degeneracy of damaged nerve fibers over time.
\\
\\
Assume that the membrane potential of a damaged individual nerve fiber has been recorded $N\in\mathbb N^*$ times during the disease. According with the two facts previously stated in this subsection, for every $k\in\llbracket 1,N\rrbracket$, we suggest to model the $k$-th recording by Equation (\ref{fractional_HH}) with $H = H_k$, where $(H_1,\dots,H_N)$ is a vector of $]0,1[^N$ such that
\begin{displaymath}
H_k\geqslant H_{k + 1} > 1/4
\end{displaymath}
for every $k\in\llbracket 1,N - 1\rrbracket$.
%


%
\subsection{Numerical simulations}
The purpose of this subsection is to provide some simulations of the Hodgkin-Huxley neuron model studied in this paper and to show why the viability condition on the vector field of Equation (\ref{fractional_HH}) is crucial.
\\
\\
Throughout this subsection, assume that $\textrm B$ is a fractional Brownian motion of Hurst parameter $H\in ]1/2,1[$. It is simulated via Wood-Chan's method (see Coeurjolly \cite{COEURJOLLY00}, Section 3.6). The solution to Equation (\ref{fractional_HH}) is approximated by the associated (explicit) Euler scheme (see Lejay \cite{LEJAY10}, Section 5).
\\
\\
The following values of the equilibrium potentials and of the normalized conductances come from Hodgkin and Huxley \cite{HH52}, Part II.
\begin{center}
\begin{tabular}{|l| l l |}
\hline
$k$ & $E_k$ (mV) & $\bar g_k$ (mS/cm$^2$)\\
\hline\hline
Na & 115 & 120\\
K & -12 & 36\\
L & 10.6 & 0.3\\
\hline
\end{tabular}
\end{center}
Put also $C := 1$ $\mu$F/cm$^2$ and $T := 50$ mS and consider the initial condition $X_0 := (V_0,m_0,h_0,n_0)$ with $V_0 := 0$ mV and
\begin{displaymath}
\begin{pmatrix}
m_0\\
h_0\\
n_0
\end{pmatrix}
:=
\begin{pmatrix}
\alpha_m(V_0)(\alpha_m(V_0) +\beta_m(V_0))^{-1}\\
\alpha_h(V_0)(\alpha_h(V_0) +\beta_h(V_0))^{-1}\\
\alpha_n(V_0)(\alpha_n(V_0) +\beta_n(V_0))^{-1}
\end{pmatrix}
\approx
\begin{pmatrix}
0.053\\
0.596\\
0.318
\end{pmatrix}.
\end{displaymath}
The deterministic Hodgkin-Huxley model (see Section 2) has Hopf bifurcations. The bifurcation parameter is the total membrane current density $I$. There exists $I_2 > I_1 > 0$ ($I_1\approx 3$ $\mu$A/cm$^2$ and $I_2\approx 6$ $\mu$A/cm$^2$) such that:
\begin{itemize}
 \item If $I\in [0,I_1]$, then $V$ returns at rest without spike.
 \item If $I\in ]I_1,I_2]$, then there is a single spike before $V$ returns at rest.
 \item If $I\in ]I_2,\infty[$, then there are multiple spikes. There is a limit cycle.
\end{itemize}
\noindent
On the following figure, in order to illustrate these behaviors, the Hodgkin-Huxley model is plotted for three different values of the bifurcation parameter $I$:
\begin{figure}[htbp]
\begin{center}
\includegraphics[scale = 0.3]{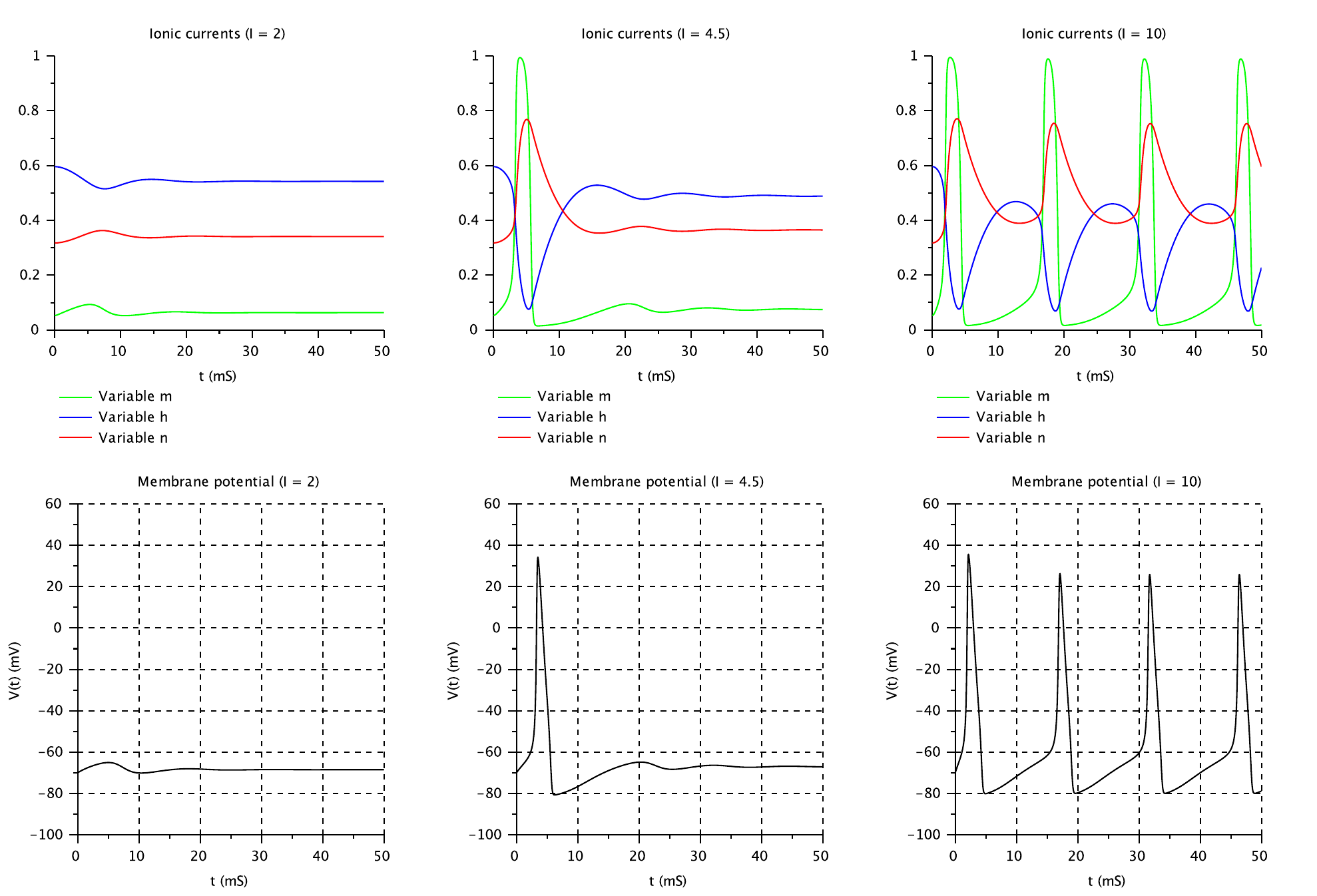}
\end{center}
\caption{Behaviors of the deterministic H-H model}
\label{fig:image1}
\end{figure}
\newline
Note that the stochastic Hodgkin-Huxley model studied in this paper (i.e. the solution $X$ to Equation (\ref{fractional_HH})) switches between these three different behaviors (see Figure 4).
\\
\\
In Equation (\ref{fractional_HH}), assume that:
\begin{displaymath}
\sigma(p,v) :=
0.25\cdot
\begin{pmatrix}
 p_1(1 - p_1) & 0 & 0\\
 0 & p_2(1 - p_2) & 0\\
 0 & 0 & p_3(1 - p_3)\\
 0 & 0 & 0
\end{pmatrix}.
\end{displaymath}
So, $(b,\sigma)$ satisfies assumptions \ref{viability_assumption} and \ref{existence_uniqueness_compact} with $K = [0,1]^3\times\mathbb R$. On the following figure, the solution to Equation (\ref{fractional_HH}) is plotted for $H = 0.55$ and $H = 0.95$:
\begin{figure}[htbp]
\begin{center}
\includegraphics[scale = 0.3]{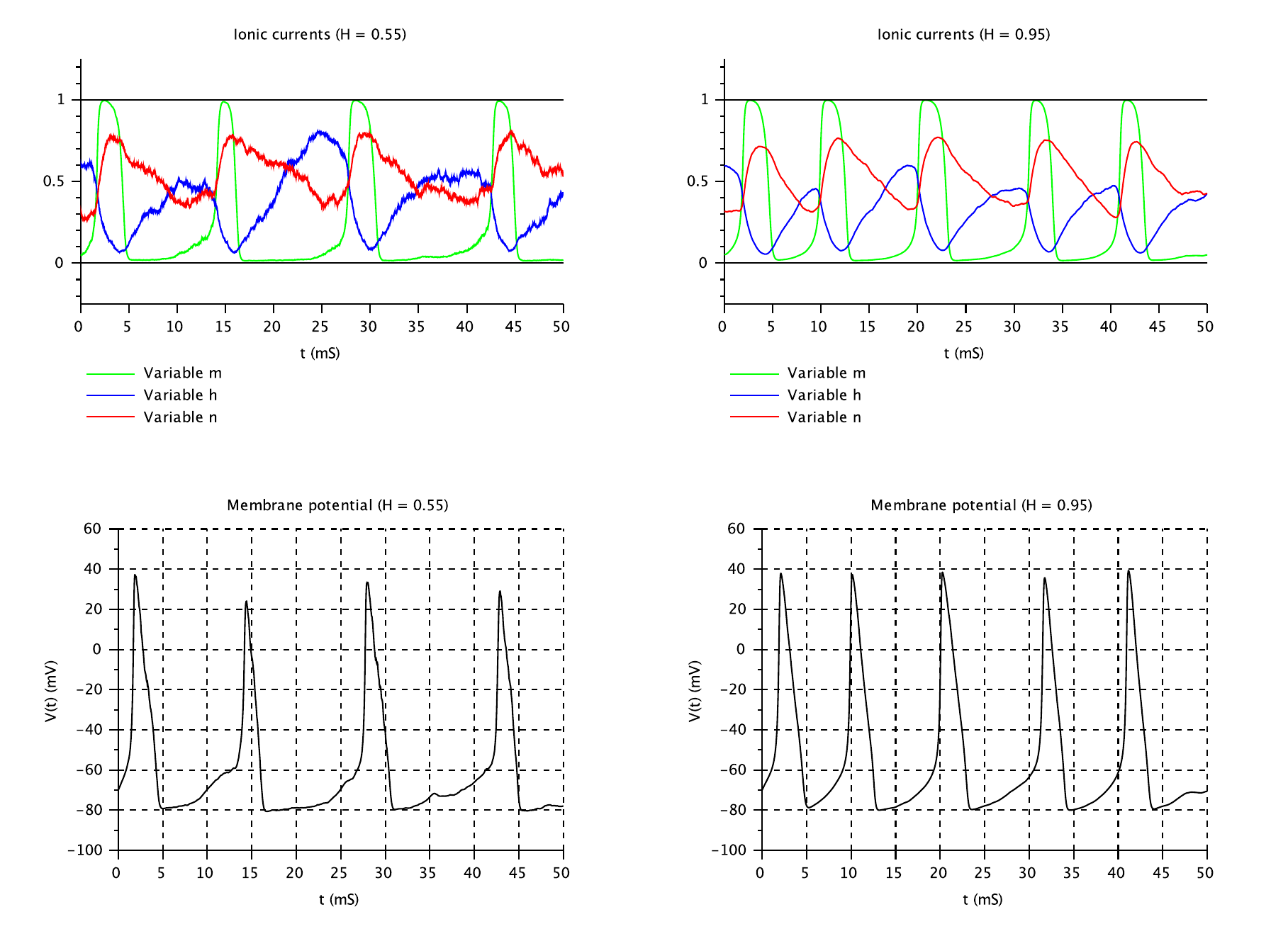}
\end{center}
\caption{Stochastic H-H model with viability condition}
\label{fig:image1}
\end{figure}
\newline
One can see that $X$ is viable in $K$ as mentioned in Subsection 3.1 and $H$ controls the local regularity of the paths of $P = (m,h,n)$ as mentioned in Subsection 3.2. Via $P$, the value of $H$ impacts also the regularity of the shape of the paths of the process $V$.
\\
\\
Now, in order to show that Assumption \ref{viability_assumption} with $K$ is crucial, let us simulate Equation (\ref{fractional_HH}) with an additive noise ($\sigma\equiv 0.25$). Then, $X$ is not viable in $K$ and the model is not appropriate:
\begin{figure}[htbp]
\begin{center}
\includegraphics[scale = 0.3]{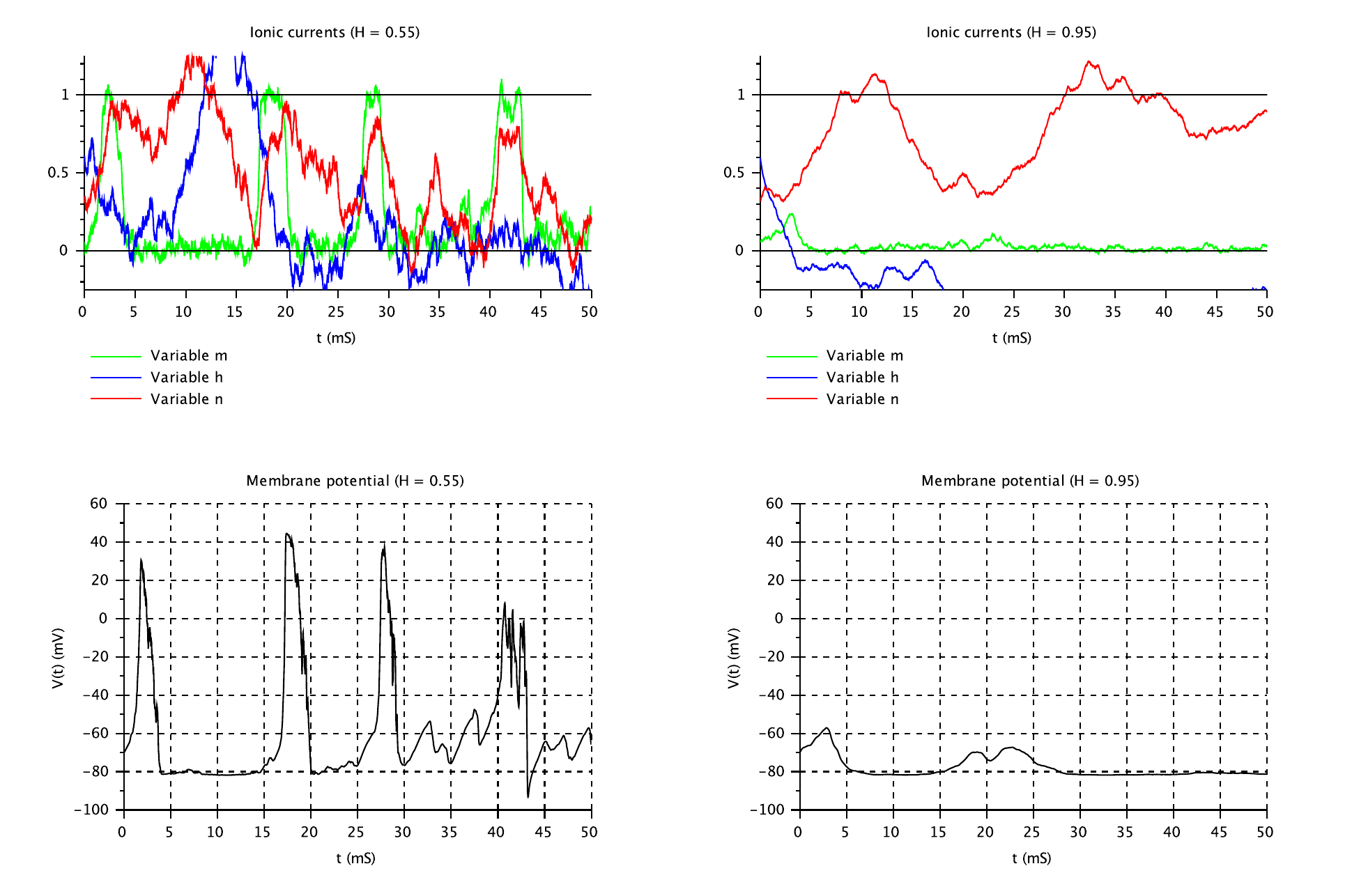}
\end{center}
\caption{Stochastic H-H model with additive noise}
\label{fig:image1}
\end{figure}
%
%


%
\section{Discussion and perspectives}
The stochastic neuron model studied in this paper is an extension of the deterministic Hodgkin-Huxley model obtained by perturbing the dynamics of the ionic currents by a multiplicative fractional noise. By the viability theorem proved in Appendix A, the functions $m$, $h$ and $n$ are still $[0,1]$-valued. Thanks to the rough differential equations framework, to take the fractional Brownian motion as driving signal allows to control the regularity of the paths of $X$. The model can be simulated easily and we are now investigating some applications of our model to the modeling of the potential of an individual nerve fiber during neuropathies.
\\
\\
On the figure below, for $T := 1000$ mS, $I := 10$ $\mu$A/cm$^2$, $H := 0.9$ and $\sigma_k := 0.25$ for every $k\in\llbracket 1,3\rrbracket$, the stochastic model switches between the three behaviors mentioned at Subsection 3.3:
\begin{figure}[htbp]
\begin{center}
\includegraphics[scale = 0.3]{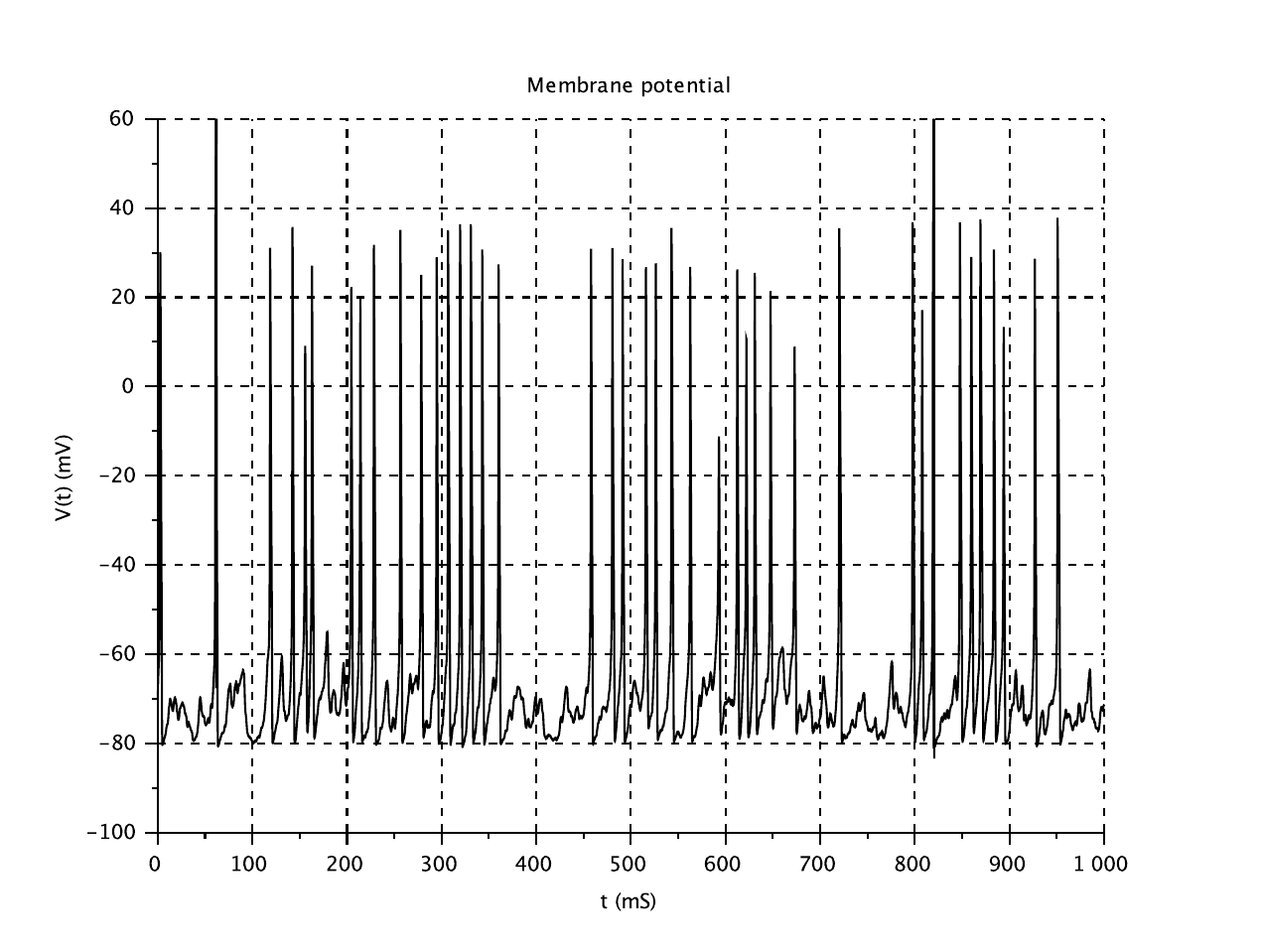}
\end{center}
\caption{Behaviors switching}
\label{fig:image1}
\end{figure}
\newline
An interesting research perspective is to study equilibrium stability and bifurcations of the fractional Hodgkin-Huxley model, for a random current $I$, in the random dynamical systems framework (see Arnold \cite{ARNOLD97}, Chapter 9).
\appendix
%


%
\section{A viability theorem for differential equations driven by a fractional Brownian motion}
The first subsection deals with the regularity of the paths of the fractional Brownian motion and differential equations driven by a fractional Brownian motion. The second subsection deals with a viability result which is crucial to study the fractional Hodgkin-Huxley model provided in this paper.
\\
\\
\textbf{Notations.} Consider $d,e\in\mathbb N^*$.
\begin{enumerate}
 \item The euclidean scalar product (resp. norm) on $\mathbb R^d$ is denoted by $\langle .,.\rangle$ (resp. $\|.\|$). For every $x\in\mathbb R^d$, its $j$-th coordinate with respect to the canonical basis of $\mathbb R^d$ is denoted by $x_j$ for every $j\in\llbracket 1,d\rrbracket$.
 \item The space of the matrices of size $d\times e$ is denoted by $\mathcal M_{d,e}(\mathbb R)$. For every $M\in\mathcal M_{d,e}(\mathbb R)$, its $(i,j)$-th coordinate with respect to the canonical basis of $\mathcal M_{d,e}(\mathbb R)$ is denoted by $M_{i,j}$ for every $(i,j)\in\llbracket 1,d\rrbracket\times \llbracket 1,e\rrbracket$.
 \item The space of the continuous functions from $[0,T]$ into $\mathbb R^d$ is denoted by $C^0([0,T],\mathbb R^d)$ and equipped with the uniform norm $\|.\|_{\infty,T}$ such that
 \begin{displaymath}
 \|f\|_{\infty,T} :=
 \sup_{t\in [0,T]}
 \|f(t)\|
 \end{displaymath}
 for every $f\in C^0([0,T],\mathbb R^d)$.
 \item The space of the $\alpha$-H\"older continuous maps from $[s,t]$ into $\mathbb R^d$ with $\alpha\in ]0,1[$ and $s,t\in [0,T]$ such that $s < t$ is denoted by $C^{\alpha}([s,t],\mathbb R^d)$:
 \begin{displaymath}
 C^{\alpha}([s,t],\mathbb R^d) :=
 \left\{
 f : [s,t]\rightarrow\mathbb R^d :
 \sup_{s\leqslant u < v\leqslant t}
 \frac{\|f(v) - f(u)\|}{|v - u|^{\alpha}} <\infty
 \right\}.
 \end{displaymath}
 Note that for every $\alpha,\beta\in ]0,1[$ such that $\alpha\leqslant\beta$,
 \begin{displaymath}
 C^{\beta}([s,t],\mathbb R^d)
 \subset
 C^{\alpha}([s,t],\mathbb R^d).
 \end{displaymath}
 Let $\|.\|_{\alpha,s,t}$ be the semi-norm on $C^{\alpha}([s,t],\mathbb R^d)$ defined by:
 \begin{displaymath}
 \|f\|_{\alpha,s,t} :=
 \sup_{s\leqslant u < v\leqslant t}
 \frac{\|f(v) - f(u)\|}{|v - u|^{\alpha}}
 \textrm{ $;$ }
 \forall f\in C^{\alpha}([s,t],\mathbb R^d).
 \end{displaymath}
 \item The space of the $N\in\mathbb N^*$ times continuously differentiable maps from $\mathbb R^d$ into $\mathbb R^e$ is denoted by $C^N(\mathbb R^d,\mathbb R^e)$.
\end{enumerate}
%


%
\subsection{Differential equations driven by a fractional Brownian motion}
This subsection deals with basics on differential equations driven by a fractional Brownian motion.
%


%
\begin{definition}\label{fbm}
Let $B$ be a centered Gaussian process. It is a fractional Brownian motion if and only if there exists $H\in ]0,1[$, called Hurst parameter of $B$, such that
\begin{displaymath}
\normalfont{\textrm{cov}}(B(s),B(t)) =
\frac{1}{2}(|s|^{2H} +
|t|^{2H} -
|t - s|^{2H})
\end{displaymath}
for every $(s,t)\in [0,T]^2$.
\end{definition}
%


%
\begin{proposition}\label{regularity_fbm}
Let $B$ be a fractional Brownian motion of Hurst parameter $H\in ]0,1[$. The paths of $B$ are $\alpha$-H\"older continuous for every $\alpha\in ]0,H[$.
\end{proposition}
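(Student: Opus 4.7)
The plan is to prove the stated path regularity via the classical Kolmogorov continuity criterion applied to the Gaussian increments of $B$. The covariance formula in Definition \ref{fbm} gives directly
\begin{displaymath}
\mathbb{E}[(B(t)-B(s))^2] = |t-s|^{2H},
\end{displaymath}
so the increment $B(t)-B(s)$ is a centered Gaussian variable with variance $|t-s|^{2H}$. Consequently, for every $p\geqslant 1$ there is a constant $c_p>0$ (the $p$-th absolute moment of a standard normal) with
\begin{displaymath}
\mathbb{E}[|B(t)-B(s)|^p] = c_p\,|t-s|^{pH}, \qquad s,t\in[0,T].
\end{displaymath}

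The strategy now is to absorb $1$ into the exponent. Given $\alpha\in\,]0,H[$, I would first pick $p\in\mathbb{N}^*$ large enough that
\begin{displaymath}
p>\frac{1}{H-\alpha},\qquad\text{equivalently}\qquad \alpha<H-\frac{1}{p}.
\end{displaymath}
With such a choice, the increment moment bound reads
\begin{displaymath}
\mathbb{E}[|B(t)-B(s)|^p] \leqslant c_p\,|t-s|^{1+(pH-1)},
\end{displaymath}
and $pH-1>0$. Kolmogorov's continuity theorem then guarantees that $B$ admits a modification whose paths are almost surely $\beta$-Hölder continuous on $[0,T]$ for every $\beta<(pH-1)/p = H-1/p$, and in particular for our chosen $\alpha$. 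Taking a countable sequence $\alpha_n\uparrow H$ and intersecting the corresponding almost-sure events yields a single full measure event on which the paths are $\alpha$-Hölder continuous for every $\alpha\in\,]0,H[$ simultaneously.

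The substantive step is really just the Gaussian moment computation combined with the correct choice of $p$; there is no genuine obstacle, since Kolmogorov's theorem does all the work of upgrading the moment estimate to a pathwise Hölder bound. The only mild care needed is to argue that the modification provided by Kolmogorov coincides with the original $B$ (which one usually does by redefining $B$ to be this modification, legitimate since we are only asserting a statement about the law of the paths), and to make the union-over-$\alpha$ argument explicit to recover the statement as given for every $\alpha<H$, not merely almost every $\alpha$.
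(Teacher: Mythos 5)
Your argument is correct and is essentially the standard proof that the paper delegates to its citation (Nualart, Section 5.1): compute $\mathbb{E}[(B(t)-B(s))^2]=|t-s|^{2H}$ from the covariance, upgrade to $p$-th moments by Gaussianity, and apply Kolmogorov's continuity criterion with $p>1/(H-\alpha)$. You also correctly flag the two routine but necessary points --- passing to the continuous modification and intersecting over a countable sequence $\alpha_n\uparrow H$ --- so nothing is missing.
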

\noindent
See Nualart \cite{NUALART06}, Section 5.1.
\\
\\
Let $B$ be a fractional Brownian motion of Hurst parameter $H\in ]1/4,1[$ and consider $\textrm B := (\textrm B_1,\dots,\textrm B_e)$, where $\textrm B_1,\dots,\textrm B_e$ are $e\in\mathbb N^*$ independent copies of $B$. Consider also $(\textrm B^N)_{N\in\mathbb N^*}$, a sequence of piecewise linear approximations of $\textrm B$.
\\
\\
In the sequel, $(\Omega,\mathcal A,\mathbb P)$ is the canonical probability space for $B$.
\\
\\
Consider the differential equation
\begin{equation}\label{main_equation_appendix}
X(t) =
X_0 +\int_{0}^{t}b(X(s))ds +
\int_{0}^{t}\sigma(X(s))d\textrm B(s),
\end{equation}
where $X_0\in\mathbb R^d$ and $b$ (resp. $\sigma$) is a Lipschitz continuous map from $\mathbb R^d$ into $\mathbb R^d$ (resp. $\mathcal M_{d,e}(\mathbb R)$).
%


%
\begin{definition}\label{solution_differential_equation}
In the sense of rough paths, a process $X := (X(t))_{t\in [0,T]}$ is a solution on $[0,T]$ to Equation (\ref{main_equation_appendix}) if and only if
\begin{displaymath}
\lim_{N\rightarrow\infty}
\|X^N - X\|_{\infty,T} = 0,
\end{displaymath}
where for every $N\in\mathbb N^*$, $X^N$ is the solution on $[0,T]$ of the ordinary differential equation
\begin{displaymath}
X^N(t) =
X_0 +\int_{0}^{t}b(X^N(s))ds +
\int_{0}^{t}\sigma(X^N(s))d\normalfont\textrm B^N(s).
\end{displaymath}
\end{definition}
\noindent
In the sequel, the maps $b$ and $\sigma$ satisfy the following assumption.
%


%
\begin{assumption}\label{existence_uniqueness_assumption}
$b\in C^{[1/H] + 1}(\mathbb R^d,\mathbb R^d)$ and $\sigma\in C^{[1/H] + 1}(\mathbb R^d,\mathcal M_{d,e}(\mathbb R))$, their derivatives are bounded and $b$ (resp. $\sigma$) is Lipschitz continuous from $\mathbb R^d$ into itself (resp. $\mathcal M_{d,e}(\mathbb R)$).
\end{assumption}
%


%
\begin{theorem}\label{existence_uniqueness_rough}
Under Assumption \ref{existence_uniqueness_assumption}, Equation (\ref{main_equation_appendix}) with $X_0\in\mathbb R^d$ as initial condition has a unique solution denoted by $\pi_{b,\sigma}(0,X_0,\normalfont\textrm B)$ and its paths belong to $C^{\alpha}([0,T],\mathbb R^d)$ for every $\alpha\in ]0,H[$.
\end{theorem}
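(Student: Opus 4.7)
The plan is to deduce Theorem \ref{existence_uniqueness_rough} from the universal limit theorem of Lyons for rough differential equations, combined with the rough path enhancement of fractional Brownian motion due to Coutin and Qian. Since $H > 1/4$, set $p := 1/\alpha$ for some fixed $\alpha \in ]1/4, H[$ chosen so that $[1/H] = [p]$, and let $N_0 := [1/H]$ be the level of truncation of the free tensor algebra that the theory requires.

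First, I would enhance the driver $\mathbf{B}$ into a geometric $\alpha$-Hölder rough path $\mathbb{B} = (1, \mathbf{B}^{(1)}, \dots, \mathbf{B}^{(N_0)})$ taking values in the truncated tensor algebra $T^{N_0}(\mathbb{R}^e)$. The key input is the Coutin--Qian theorem (see also Friz--Victoir), which asserts that for any sequence of piecewise linear interpolations $\mathbf{B}^N$ of $\mathbf{B}$ along a partition whose mesh tends to zero, the canonical lifts $S_{N_0}(\mathbf{B}^N)$ (obtained by iterated Riemann--Stieltjes integration) converge in $\alpha$-Hölder rough path topology, almost surely, to a limit $\mathbb{B}$ that is independent of the chosen sequence of partitions. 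This is exactly the step that requires $H > 1/4$: it is what makes the iterated integrals of levels $\leqslant [1/H]$ finite and consistent.

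Second, under Assumption \ref{existence_uniqueness_assumption}, the vector fields $b$ and $\sigma$ are $\operatorname{Lip}^{\gamma}$ in the sense of Stein for some $\gamma > 1/\alpha$, since they are $C^{[1/H]+1}$ with bounded derivatives and Lipschitz. Lyons' universal limit theorem then applies to the rough differential equation driven by $\mathbb{B}$: there is a unique solution $\mathbb{X}$ in the space of geometric $\alpha$-Hölder rough paths with initial condition $X_0$, and the Itô--Lyons map
\begin{displaymath}
\Phi : (X_0, \mathbb{Z}) \longmapsto \pi_{b,\sigma}(0, X_0, \mathbb{Z})
\end{displaymath}
is continuous from $\mathbb{R}^d \times C^{\alpha\text{-rough}}([0,T])$ to $C^\alpha([0,T], \mathbb{R}^d)$. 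Each ODE approximation $X^N$ in Definition \ref{solution_differential_equation} coincides with $\Phi(X_0, S_{N_0}(\mathbf{B}^N))$, because when the driver is smooth the rough and classical integrals agree. Composing with the convergence $S_{N_0}(\mathbf{B}^N) \to \mathbb{B}$ from the first step therefore gives $\|X^N - X\|_{\infty,T} \to 0$ with $X$ the projection of $\Phi(X_0, \mathbb{B})$, proving both the existence of a solution in the sense of Definition \ref{solution_differential_equation} and its uniqueness (the limit does not depend on the approximating sequence). The $\alpha$-Hölder regularity $X \in C^\alpha([0,T], \mathbb{R}^d)$ follows directly from the first-level component of $\Phi(X_0, \mathbb{B})$ being $\alpha$-Hölder, since the Itô--Lyons map takes values in $\alpha$-Hölder rough paths.

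The main obstacle is the first step, the rough path lift of $\mathbf{B}$ and the convergence of the canonical lifts of its piecewise linear approximations: this is precisely where the restriction $H > 1/4$ is exploited, and its proof (Coutin--Qian) requires delicate moment estimates on the iterated integrals of piecewise linear approximations of fBm. Given that input, the remainder is a direct application of Lyons' continuity theorem together with the fact that classical integration coincides with rough integration when the driver is piecewise smooth.
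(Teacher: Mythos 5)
Your proposal is correct and follows essentially the same route as the paper, which proves this statement simply by citing Friz and Victoir \cite{FV10} (Theorem 10.26 and Exercises 10.55--10.56); the content of that citation is precisely the rough-path lift of the fractional Brownian motion for $H > 1/4$ together with Lyons' universal limit theorem and the consistency of rough and classical integration for piecewise linear drivers, as you describe. Your remark that the unboundedness of $b$ and $\sigma$ is compensated by their bounded derivatives and Lipschitz continuity is exactly what the cited exercises handle.
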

\noindent
See Friz and Victoir \cite{FV10}, Theorem 10.26, Exercice 10.55 and Exercice 10.56.
\\
\\
In some cases, at least locally, the paths of the solution to Equation (\ref{main_equation_appendix}) are $\alpha$-H\"older continuous for every $\alpha\in ]0,H[$, but not $H$-H\"older continuous. In other words, the solution to Equation (\ref{main_equation_appendix}) inherits the H\"older regularity of $\textrm B$. The two following results apply to the stochastic extensions of the Hodgkin-Huxley model simulated in Subsection 3.3. The proofs of these results are similar to the proof of Proposition 4.10 in the 3rd unpublished arXiv version of Castaing, Marie and Raynaud de Fitte \cite{CMR17}.
%


%
\begin{proposition}\label{regularity_solution_AN}
Under Assumption \ref{existence_uniqueness_assumption}, if $\sigma$ is constant, then the paths of the solution to Equation (\ref{main_equation_appendix}) are $\alpha$-H\"older continuous on $[s,t]$ for every $\alpha\in ]0,H[$, but not $H$-H\"older continuous.
\end{proposition}
%


%
\begin{proof}
Consider $\omega\in\Omega$ and assume that there exists $(s,t)\in [0,T]^2$ such that $s < t$ and $X(\omega)$ is $H$-H\"older continuous on $[s,t]$. Since the map
\begin{displaymath}
u\in [s,t]\longmapsto\int_{s}^{u}b(X(\omega,r))dr
\end{displaymath}
is Lipschitz continuous, it is $H$-H\"older continuous. Moreover, for every $(u,v)\in [s,t]^2$ such that $u < v$,
\begin{displaymath}
\textrm B(\omega,v) -\textrm B(\omega,u) =
\frac{1}{\sigma}\left(X(\omega,v) - X(\omega,u) -\int_{u}^{v}b(X(\omega,r))dr\right).
\end{displaymath}
So, $\textrm B(\omega)$ should be $H$-H\"older continuous on $[s,t]$ as linear combination of $H$-H\"older continuous functions on $[s,t]$, but this is wrong. So, necessarily, $X(\omega)$ is not $H$-H\"older continuous on $[s,t]$.
\end{proof}
%


%
\begin{proposition}\label{regularity_solution_DMN}
Consider $H\in ]1/2,1[$, $X_0\in\mathbb R^d$ and $\omega\in\Omega$. Assume that $d = e$, $(b,\sigma)$ fulfills Assumption \ref{existence_uniqueness_assumption} and $\sigma_{k,l}\equiv 0$ for every $(k,l)\in\llbracket 1,d\rrbracket^2$ such that $k\not= l$. For every $(s,t)\in [0,T]^2$ such that $s < t$ and
\begin{equation}\label{regularity_solution_DMN_1}
(\sigma_{k,k}\circ
\pi_{b,\sigma}(0,X_0,\normalfont\textrm B(\omega)))([s,t])\subset
\mathbb R^*
\textrm{ $;$ }
\forall k\in\llbracket 1,d\rrbracket,
\end{equation}
the map $\pi_{b,\sigma}(0,X_0,\normalfont\textrm B(\omega))$ is $\alpha$-H\"older continuous on $[s,t]$ for every $\alpha\in ]0,H[$, but not $H$-H\"older continuous.
\end{proposition}
%


%
\begin{proof}
Consider $(s,t)\in [0,T]^2$ such that $s < t$ and (\ref{regularity_solution_DMN_1}) is true. Let $k\in\llbracket 1,d\rrbracket$ be arbitrarily chosen and for every $(u,v)\in [s,t]^2$ such that $v < u$, consider
\begin{displaymath}
I_k(u,v)(\omega) :=
\int_{u}^{v}\sigma_{k,k}(X(\omega,r))d\textrm B_k(\omega,r),
\end{displaymath}
where $X(\omega) :=\pi_{b,\sigma}(0,X_0,\textrm B(\omega))$. Since $\sigma$ (resp. $X(\omega)$) is continuous on $\mathbb R$ (resp. $[s,t]$), by (\ref{regularity_solution_DMN_1}), there exists $\sigma_{k}^{*}(\omega) > 0$ such that:
\begin{equation}\label{regularity_solution_DMN_2}
|\sigma_{k,k}(X(\omega,u))|\geqslant\sigma_{k}^{*}(\omega) > 0
\textrm{ $;$ }
\forall u\in [s,t].
\end{equation}
Assume that the map $u\in [s,t]\mapsto I_k(s,u)(\omega)$ is $H$-H\"older continuous on $[s,t]$. Consider $\alpha\in ]0,H[$. By Young-Love's estimate (see Friz and Victoir \cite{FV10}, Theorem 6.8), there exists a deterministic constant $c > 0$ such that for $(u,v)\in [s,t]^2$ satisfying $u < v$,
\begin{eqnarray*}
 |I_k(u,v)(\omega) -\sigma_{k,k}(X(\omega,u))(\textrm B_k(\omega,v) -\textrm B_k(\omega,u))|
 & \leqslant & c|v - u|^{2\alpha}\\
 & &
 \times\|X(\omega)\|_{\alpha,s,t}\|\textrm B_k(\omega)\|_{\alpha,s,t}.
\end{eqnarray*}
So, by Inequality (\ref{regularity_solution_DMN_2}):
\begin{eqnarray*}
 |\textrm B_k(\omega,v) -\textrm B_k(\omega,u)|
 & \leqslant &
 \frac{1}{\sigma_{k}^{*}(\omega)}
 |v - u|^H\\
 & &
 \times
 (T^{2\alpha - H}\|X(\omega)\|_{\alpha,s,t}\|\textrm B_k(\omega)\|_{\alpha,s,t} +
 \|I_k(s,.)(\omega)\|_{H,s,t}).
\end{eqnarray*}
Since $\textrm B_k(\omega)$ is not $H$-H\"older continuous on $[s,t]$, there is a contradiction. Therefore, $u\in [s,t]\mapsto I_k(s,u)(\omega)$ is not $H$-H\"older continuous on $[s,t]$. In conclusion, $X_k(\omega)$ is not $H$-H\"older continuous on $[s,t]$ by Equation (\ref{main_equation_appendix}).
\end{proof}
%


%
\subsection{The viability theorem}
This subsection deals with a corollary of the viability theorem proved in Coutin and Marie \cite{CM16} which is crucial to study the fractional Hodgkin-Huxley model provided in this paper.
\\
\\
Let $K\subset\mathbb R^d$ be a closed convex set.
%


%
\begin{definition}\label{viability}
A function $\varphi : [0,T]\rightarrow\mathbb R^d$ is viable in $K$ if and only if
\begin{displaymath}
\varphi(t)\in K
\textrm{ $;$ }
\forall t\in [0,T].
\end{displaymath}
\end{definition}
%


%
\begin{definition}\label{invariant_sets}
Under Assumption \ref{existence_uniqueness_assumption}, the subset $K$ is invariant for $\pi_{b,\sigma}(0,.;\normalfont\textrm B)$ if and only if, for any initial condition $x_0\in K$, the paths of $\pi_{b,\sigma}(0,x_0;\normalfont\textrm B)$ are viable in $K$.
\end{definition}
\noindent
\textbf{Notation.} For every $x\in K$, the normal cone to $K$ at $x$ is denoted by $N_K(x)$:
\begin{displaymath}
N_K(x) :=
\{s\in\mathbb R^d :
\forall y\in K
\textrm{$,$ }
\langle s,y - x\rangle\leqslant 0\}.
\end{displaymath}
It the sequel, the maps $b$ and $\sigma$ satisfy the following assumption.
\begin{assumption}\label{viability_assumption}
For every $x\in\partial K$ and $s\in N_K(x)$,
\begin{displaymath}
\langle s,b(x)\rangle\leqslant 0
\end{displaymath}
and
\begin{displaymath}
\langle s,\sigma_{.,k}(x)\rangle = 0
\textrm{ $;$ }
\forall k\in\llbracket 1,e\rrbracket.
\end{displaymath}
\end{assumption}
%


%
\begin{proposition}\label{viability_theorem}
Under Assumption \ref{existence_uniqueness_assumption}, $K$ is invariant for $\pi_{b,\sigma}(0,.;\normalfont\textrm B)$ if and only if $b$ and $\sigma$ satisfy Assumption \ref{viability_assumption}.
\end{proposition}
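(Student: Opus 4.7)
The plan is to reduce both directions to the classical Nagumo tangential characterization of viability for ODEs on a closed convex set, applied to the piecewise smooth ODEs driven by the piecewise linear approximations $\textrm B^N$ that underlie Definition \ref{solution_differential_equation}.

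For the sufficiency direction, I would fix $N \in \mathbb N^*$ and consider the ODE
\begin{displaymath}
\dot X^N(t) = b(X^N(t)) + \sigma(X^N(t))\dot{\textrm B}^N(t).
\end{displaymath}
At any $x \in \partial K$ and any $s \in N_K(x)$, Assumption \ref{viability_assumption} yields
\begin{displaymath}
\langle s, b(x) + \sigma(x)\dot{\textrm B}^N(t)\rangle = \langle s, b(x)\rangle + \sum_{k=1}^{e}\dot{\textrm B}^N_k(t)\langle s, \sigma_{.,k}(x)\rangle \leqslant 0,
\end{displaymath}
so the classical viability theorem for ODEs with Carath\'eodory right-hand side on a closed convex set forces $X^N(t) \in K$ for every $t \in [0,T]$. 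Since Definition \ref{solution_differential_equation} gives $\|X^N - X\|_{\infty,T} \to 0$ and $K$ is closed, viability is transferred to the rough limit $X$, for every $\omega \in \Omega$.

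For the necessary direction, I would fix an arbitrary $x_0 \in \partial K$ and $s_0 \in N_K(x_0)$, start the equation at $x_0$, and use the short-time expansion
\begin{displaymath}
X(t) - x_0 = t b(x_0) + \sigma(x_0)\textrm B(t) + R(t),
\end{displaymath}
where the a priori rough-path estimates of Theorem \ref{existence_uniqueness_rough} combined with the continuity of $b$ and $\sigma$ give $\|R(t)\| = o(t^H)$ as $t \to 0^+$ almost surely. Pairing with $s_0$, the scalar $\langle s_0, \sigma(x_0)\textrm B(t)\rangle = \sum_{k=1}^{e} \textrm B_k(t)\langle s_0, \sigma_{.,k}(x_0)\rangle$ is a centered Gaussian with variance $t^{2H}\sum_{k=1}^{e}\langle s_0, \sigma_{.,k}(x_0)\rangle^2$, hence of order $t^H$, which dominates both the drift contribution $t \langle s_0, b(x_0)\rangle$ and the remainder because $H < 1$. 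Consequently, if some $\langle s_0, \sigma_{.,k}(x_0)\rangle$ were nonzero, the event $\{\langle s_0, X(t) - x_0\rangle > 0\}$ would have positive probability for arbitrarily small $t$, contradicting the invariance of $K$. This forces $\langle s_0, \sigma_{.,k}(x_0)\rangle = 0$ for every $k$; once the noise term drops out, comparing $t \langle s_0, b(x_0)\rangle$ against the remainder yields $\langle s_0, b(x_0)\rangle \leqslant 0$.

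The hard part will be the remainder control in the necessary direction: one must establish sharp enough pathwise bounds to guarantee that the sign of the leading Gaussian term actually governs the sign of $\langle s_0, X(t) - x_0\rangle$ for small $t$. This is especially delicate when $H \leqslant 1/2$, where the second level of the rough path enters $R(t)$ and produces contributions of order $t^{2H}$, which remain $o(t^H)$ only thanks to $H > 1/4$. The pathwise framework of Coutin and Marie \cite{CM16} combined with the Friz-Victoir rough-path estimates is precisely designed to deliver these bounds.
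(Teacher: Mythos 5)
First, a point of comparison: the paper does not prove Proposition \ref{viability_theorem} at all; it is quoted from Coutin and Marie \cite{CM16}, Proposition 5.3, so there is no in-paper argument to measure yours against. On its own merits, your sufficiency direction is sound and natural: Assumption \ref{viability_assumption} kills the noise coefficients in the pairing, so $\langle s, b(x)+\sigma(x)\dot{\textrm{B}}^N(t)\rangle\leqslant 0$ for every $s\in N_K(x)$, which for a closed convex $K$ is exactly the Nagumo tangency condition for the (piecewise autonomous) approximating ODEs; uniqueness under Assumption \ref{existence_uniqueness_assumption} upgrades viability to invariance for each $X^N$, and closedness of $K$ transfers it to the uniform limit $X$ of Definition \ref{solution_differential_equation}, for every $\omega$.

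The gap is in the necessity direction, at the drift step. Your extraction of $\langle s_0,\sigma_{.,k}(x_0)\rangle=0$ is fine: if some coefficient were nonzero, the term $\sum_k\textrm{B}_k(t)\langle s_0,\sigma_{.,k}(x_0)\rangle$ is a scalar fractional Brownian motion whose fluctuations of order $t^H$ (with both signs, e.g.\ by the law of the iterated logarithm) dominate $t\,\langle s_0,b(x_0)\rangle$ and $R(t)=o(t^H)$, so $\langle s_0,X(t)-x_0\rangle$ would become positive. But once that term is removed you are left with $t\,\langle s_0,b(x_0)\rangle+\langle s_0,R(t)\rangle$, and $R(t)=o(t^H)$ is useless here because $t=o(t^H)$ for $H<1$: the remainder may dominate the drift. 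Concretely, for $H\leqslant 1/2$ the second-level part of $R(t)$, namely the iterated integrals of $\textrm{B}$ paired with $\langle s_0, D\sigma_{.,k}(x_0)\sigma_{.,l}(x_0)\rangle$, is of order $t^{2H}\geqslant t$ and does not vanish merely because $\langle s_0,\sigma_{.,k}(x_0)\rangle=0$ at the single point $x_0$. Your closing remark locates the difficulty in the condition $H>1/4$, which is not where it lies: $t^{kH}=o(t^H)$ for every $k\geqslant 2$ and every $H>0$; the issue is comparison against $t$, not against $t^H$. A standard way to close the gap is to avoid the pathwise expansion for the drift altogether: the support of the driving rough path contains lifts of smooth paths, in particular paths arbitrarily close to $0$ and to $t\mapsto\lambda t e_k$; continuity of the It\^o--Lyons map and closedness of $K$ then force $K$ to be invariant for the ODEs $\dot x=b(x)+\lambda\sigma_{.,k}(x)$ for every $\lambda\in\mathbb{R}$, and the classical Nagumo necessity applied to this family yields both $\langle s,\sigma_{.,k}(x)\rangle=0$ and $\langle s,b(x)\rangle\leqslant 0$ simultaneously.
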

\noindent
See Coutin and Marie \cite{CM16}, Proposition 5.3.
\\
\\
Finally, let's prove that Assumption \ref{existence_uniqueness_assumption} can be relaxed when $K := C\times\mathbb R$ and $C\subset\mathbb R^{d - 1}$ is a compact and convex set.
%


%
\begin{assumption}\label{existence_uniqueness_compact}
$b\in C^{[1/H] + 1}(\mathbb R^d,\mathbb R^d)$, $\sigma\in C^{[1/H] + 1}(\mathbb R^d,\mathcal M_{d,e}(\mathbb R))$ with $\sigma_{d,.}\equiv 0$ and $b_d$ is Lipschitz continuous from $\mathbb R^d$ into $\mathbb R$.
\end{assumption}
%


%
\begin{corollary}\label{viability_corollary}
Let $C\subset\mathbb R^{d - 1}$ be a convex and compact set and consider $K := C\times\mathbb R$. If $b$ and $\sigma$ satisfy assumptions \ref{viability_assumption} and \ref{existence_uniqueness_compact}, then Equation (\ref{main_equation_appendix}) with $X_0\in K$ as initial condition has a unique solution $X$ defined on $[0,T]$ and viable in $K$.
\end{corollary}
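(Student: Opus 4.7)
The plan is to reduce Corollary \ref{viability_corollary} to Proposition \ref{viability_theorem} by a localization-and-truncation argument. The main obstacle is precisely the gap between Assumption \ref{existence_uniqueness_compact} and Assumption \ref{existence_uniqueness_assumption}: globally, only $b_d$ is Lipschitz and no bounded-derivative hypothesis is imposed. Since $K = C \times \mathbb R$ is only compact in its first $d-1$ directions, the truncation must cover both $C$ (trivial) and a sufficiently large slab $\{|v| \leq R\}$ where $V$ is guaranteed to live.

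First, split coordinates as $X = (P, V)$ with $P \in \mathbb R^{d-1}$ and $V \in \mathbb R$. Since $\sigma_{d, \cdot} \equiv 0$, the last coordinate of any prospective solution obeys the pathwise ODE $\dot V(t) = b_d(P(t), V(t))$. For any solution viable in $K$ one has $P(t) \in C$, which is compact; the global Lipschitz continuity of $b_d$ then yields constants $A, L > 0$ with $|b_d(P(t), V(t))| \leq A + L|V(t)|$, and a Gronwall estimate produces a deterministic bound $|V(t)| \leq M_T$ on $[0,T]$ depending only on $|V_0|$, $C$, $T$ and $\mathrm{Lip}(b_d)$.

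Next, fix $R > M_T$, choose a compact convex neighbourhood $C' \supset C$, and pick a cutoff $\chi \in C^{\infty}(\mathbb R^d, [0,1])$ supported in $C' \times [-R - 1, R + 1]$ and equal to $1$ on $C \times [-R, R]$. Define $\tilde b := (\chi b_1, \ldots, \chi b_{d-1}, b_d)$ and $\tilde \sigma := \chi\, \sigma$. Because $\chi$ has compact support and $b, \sigma \in C^{[1/H]+1}$, each product belongs to $C^{[1/H]+1}$ with compact support, so all its derivatives are bounded and the map is globally Lipschitz; combined with the hypothesis on $b_d$, this gives $(\tilde b, \tilde \sigma)$ satisfying Assumption \ref{existence_uniqueness_assumption}. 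To preserve Assumption \ref{viability_assumption}, observe that for $x = (p, v) \in \partial K$ the normal cone factorizes as $N_K(x) = N_C(p) \times \{0\}$, so any $s \in N_K(x)$ has $s_d = 0$. Since $\chi(x) \geq 0$, this gives $\langle s, \tilde b(x)\rangle = \chi(x)\langle s, b(x)\rangle \leq 0$ and $\langle s, \tilde\sigma_{\cdot, k}(x)\rangle = \chi(x)\langle s, \sigma_{\cdot, k}(x)\rangle = 0$ from Assumption \ref{viability_assumption} for $(b, \sigma)$.

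Theorem \ref{existence_uniqueness_rough} then produces a unique solution $\tilde X$ to the truncated equation on $[0,T]$, and Proposition \ref{viability_theorem} ensures that $\tilde X$ is viable in $K$. Applying the a priori estimate of the second step to $\tilde X$ (valid because its $V$-component satisfies the same ODE and $\tilde P(t) \in C$), one obtains $|\tilde V(t)| \leq M_T < R$, so $\tilde X(t) \in C \times [-R, R]$ where $(\tilde b, \tilde \sigma) = (b, \sigma)$. Therefore $\tilde X$ solves the original equation (\ref{main_equation_appendix}) and is viable in $K$. Conversely, any solution of (\ref{main_equation_appendix}) viable in $K$ satisfies the same bound, thus solves the truncated equation, and coincides with $\tilde X$ by Theorem \ref{existence_uniqueness_rough}. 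The delicate step is the preservation of the viability inequalities under the cutoff, which works precisely because the product structure of $K$ forces $s_d = 0$ on the normal cone; everything else is a standard localization.
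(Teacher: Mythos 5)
Your argument is essentially correct but takes a genuinely different route from the paper. The paper does not truncate the coefficients at all: it takes the maximal local solution on $[0,\tau[$ furnished by the $C^{[1/H]+1}$ regularity, applies Proposition \ref{viability_theorem} on $[0,\tau[$ to confine the first $d-1$ coordinates to the compact set $C$, runs the same Gronwall estimate on $X_d$ (whose equation is a pathwise ODE because $\sigma_{d,\cdot}\equiv 0$), and concludes that $X$ does not explode as $t\rightarrow\tau$, hence extends to $[0,T]$ by Friz--Victoir, Theorem 10.21. Your cutoff construction replaces this non-explosion/continuation step by an identification argument ($\tilde X$ stays where the truncated and original coefficients agree). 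Both proofs hinge on the same two observations --- the product structure $K=C\times\mathbb R$ forces $s_d=0$ on the normal cone, and the last coordinate obeys a linear-growth ODE while the others live in $C$ --- but the paper's route is shorter because it never has to re-verify the hypotheses of Proposition \ref{viability_theorem} for modified coefficients, while yours produces a global solution directly from Theorem \ref{existence_uniqueness_rough} without invoking a maximal-solution theorem.

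One slip to fix: as you define it, $\tilde b_d=b_d$, and Assumption \ref{existence_uniqueness_compact} only makes $b_d$ Lipschitz, so its derivatives of order $2,\dots,[1/H]+1$ need not be bounded; hence $(\tilde b,\tilde\sigma)$ does not satisfy Assumption \ref{existence_uniqueness_assumption} as stated and Theorem \ref{existence_uniqueness_rough} and Proposition \ref{viability_theorem} cannot be invoked verbatim. The repair is immediate: truncate the last component as well, setting $\tilde b_d:=\chi b_d$. The a priori Gronwall bound survives since $|\chi b_d|\leqslant|b_d|\leqslant A+L|V|$, and the normal-cone verification is unchanged because $s_d=0$ kills the contribution of the $d$-th component anyway. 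A smaller point: your final uniqueness claim only covers solutions viable in $K$; to get the unqualified uniqueness asserted in the corollary you should add that local uniqueness (available from the $C^{[1/H]+1}$ regularity of $b$ and $\sigma$) forces any solution of Equation (\ref{main_equation_appendix}) to coincide with $\tilde X$ on its whole interval of existence.
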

%


%
\begin{proof}
Since $b\in C^{[1/H] + 1}(\mathbb R^d,\mathbb R^d)$ and $\sigma\in C^{[1/H] + 1}(\mathbb R^d,\mathcal M_{d,e}(\mathbb R))$, there exists $\tau\in ]0,T]$ such that Equation (\ref{main_equation_appendix}) with $X_0\in K$ as initial condition has a unique solution $X$ on $[0,\tau[$.
\\
\\
Since $b$ and $\sigma$ satisfy Assumption \ref{viability_assumption}, by Proposition \ref{viability_theorem} applied to Equation (\ref{main_equation_appendix}) on $[0,\tau[$:
\begin{displaymath}
X(t)\in K
\textrm{ $;$ }
\forall t\in [0,\tau[.
\end{displaymath}
So, $\widetilde X := (X_1,\dots,X_{d - 1})$ is bounded on $[0,\tau[$ by a constant $M > 0$ because $C$ is a bounded subset of $\mathbb R^{d - 1}$.
\\
\\
Moreover, since $b_d$ is Lipschitz continuous from $\mathbb R^d$ into $\mathbb R$, there exists a constant $c_1 > 0$ such that
\begin{displaymath}
|b_d(x)|\leqslant c_1(1 +\|x\|)
\textrm{ $;$ }
\forall x\in\mathbb R^d.
\end{displaymath}
So, for every $t\in [0,\tau[$,
\begin{eqnarray*}
 |X_d(t)| & \leqslant &
 |X_d(0)| +\int_{0}^{t}|b_d(X(s))|ds\\
 & \leqslant &
 |X_d(0)| + c_1\int_{0}^{t}(1 +\|\widetilde X(s)\| + |X_d(s)|)ds\\
 & \leqslant &
 |X_d(0)| + c_1T(1 + M) + c_1\int_{0}^{t}|X_d(s)|ds.
\end{eqnarray*}
Then, by Gronwall's lemma,
\begin{displaymath}
|X_d(t)|
\leqslant
c_2e^{c_1T}
\end{displaymath}
with
\begin{displaymath}
c_2 :=
|X_d(0)| + c_1T(1 + M).
\end{displaymath}
Therefore, $X$ doesn't explode as $t\rightarrow\tau$.
\\
\\
In conclusion, by Friz and Victoir \cite{FV10}, Theorem 10.21, $X$ is defined on $[0,T]$ and by Proposition \ref{viability_theorem}, it is viable in $K$.
\end{proof}
%


%

%
\end{document}